\documentclass{article}
\usepackage{amsmath,amsthm,amssymb,amsfonts,amscd, array}
\usepackage{enumerate}
\usepackage[autostyle]{csquotes}

\usepackage{graphicx,epstopdf,epsfig}
\usepackage{amsfonts,epsfig,fancyhdr,graphics,amsmath,amssymb}
\usepackage{tikz}
\usetikzlibrary{arrows, shapes}
\usepackage{graphicx}
\usetikzlibrary{positioning}

\theoremstyle{theorem}
\newtheorem{theorem}{Theorem}
 \newtheorem{lemma}{Lemma}
 \newtheorem{corollary}{Corollary}
\newtheorem{proposition}{Proposition}
\newtheorem{example}{Example}
 
\theoremstyle{definition}

\begin{document}

\title{The Sign Symmetric $P_{0,1}^+$-Matrix Completion Problem}
\markright{The Sign Symmetric $P_{0,1}^+$-Matrix Completion Problem}
\author{Victor Tomno and Linety Muhati}

\maketitle
\begin{abstract}
We study sign symmetric $P_{0,1}^+$-matrix completion problem. It is shown that any non-asymmetric incomplete digraph lacks sign symmetric $P_{0,1}^+$-completion, digraphs of order at most four are completely classified and finally comparisons between sign symmetric $P_{0,1}^+$-completion and other matrix completions was given.
\end{abstract}

MSC: Primary 15A83.

{\bf Keywords}: Matrix completion, digraph, partial matrix, sign symmetric $P_{0,1}^+$-matrix, sign symmetric $P_{0,1}^+$-completion.

\section{Introduction} \label{intro-sec}
A real $n \times n$ matrix $A$ is {\em $P$-matrix ($P_0$-matrix)} if every principal minor of the matrix is positive (nonnegative)\cite{P0matrix}. A real $n \times n$ matrix $A$ is a $P_{0}^+$-matrix if for $k\in \{1,\dots,n\}$, every $k\times k$ principal minor of $A$ is nonnegative, and at least one $k\times k$ principal minor is positive \cite{po+matrices}. A {\em$P_{0,1}^+$-matrix} is $P_0^+$-matrix with positive diagonal entries \cite{po1+matrices} and a {\em sign symmetric $P_{0,1}^+$-matrix} is a $P_{0,1}^+$-matrix in which the product of twin entries is positive or both are zeros, that is $a_{ij}a_{ji}>0$ or $a_{ij}=a_{ji}=0$.

A {\em partial matrix} is a matrix with some specified entries and others are not specified. We use following proposition to give three cases in which a partial sign symmetric $P_{0,1}^+$-matrix can exist. 
\begin{proposition}
	A partial matrix $A$ is a partial sign symmetric $P_{0,1}^+$-matrix if and only if exactly one of the following holds:
	
	\begin{enumerate}[(i)]
		\item At least one diagonal entry is unspecified, product of fully specified twin entries is positive or both are zeros and each fully specified principal minor of $A$ is nonnegative.
		\item All diagonal entries are specified and are positive, at least one non-diagonal entry is unspecified, product of fully specified twin entries is positive or both are zeros and each fully specified principal minor of $A$ is nonnegative.
		\item All entries are specified and $A$ is a sign symmetric $P_{0,1}^+$-matrix.
	\end{enumerate}
	
\end{proposition}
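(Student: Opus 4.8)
The proof is a verification that unwinds what it means for a partial matrix to be a partial sign symmetric $P_{0,1}^+$-matrix, and the plan is to organize that verification by the pattern of unspecified entries of $A$. There are three mutually exclusive and exhaustive possibilities: (a) some diagonal entry of $A$ is unspecified; (b) every diagonal entry is specified but some off-diagonal entry is unspecified; (c) every entry of $A$ is specified. The argument then amounts to matching (a) with (i), (b) with (ii), and (c) with (iii); once this is done the clause ``exactly one of the following holds'' is immediate, because (a), (b), (c) partition the possibilities.

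First I would record the constraints that the definition places on the \emph{specified} data. A fully specified principal submatrix of $A$ must be a sign symmetric $P_{0,1}^+$-matrix, so each fully specified principal minor of $A$ is a principal minor of a $P_0^+$-matrix and hence nonnegative, each specified diagonal entry is a fully specified $1\times1$ principal submatrix and hence positive, and each pair of fully specified twin entries $a_{ij},a_{ji}$ satisfies $a_{ij}a_{ji}>0$ or $a_{ij}=a_{ji}=0$. Conversely, when some diagonal entry is missing (case (a)), these are \emph{all} the demands the fully specified data can be required to meet, which is precisely clause (i). When the diagonal is complete but an off-diagonal entry is missing (case (b)), the only change is that ``every specified diagonal entry is positive'' now reads ``every diagonal entry is positive,'' giving clause (ii). When $A$ is fully specified (case (c)), $A$ is its own fully specified principal submatrix, so the requirement collapses to ``$A$ is a sign symmetric $P_{0,1}^+$-matrix,'' which is clause (iii). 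Running each of these three equivalences in both directions, and quoting the appropriate clause of the hypothesis at each step, finishes the proof.

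The step I expect to be the main obstacle is making case (c) fit the same template as (a) and (b). Positivity of the diagonal, nonnegativity of every principal minor, and sign-symmetry are each inherited by principal submatrices, but the characteristic $P_0^+$ feature --- that for every order $k$ at least one $k\times k$ principal minor is positive --- is genuinely global and is \emph{not} inherited by proper principal submatrices; for example $\left(\begin{smallmatrix}1&1&1\\1&1&2\\2&1&3\end{smallmatrix}\right)$ is a sign symmetric $P_{0,1}^+$-matrix whose leading $2\times2$ principal submatrix is singular. Hence the fully specified alternative cannot be recovered from conditions on the \emph{proper} principal submatrices of $A$; it genuinely has to be stated, as in (iii), as the assertion that $A$ itself lies in the class. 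Identifying exactly which fully-specified-data constraints persist in cases (a) and (b) --- and confirming that nothing stronger can be imposed there --- is the one substantive point; the remaining checks are routine.
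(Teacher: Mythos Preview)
The paper does not supply a proof of this Proposition; it is stated immediately after the definition of ``partial matrix'' and functions thereafter as the paper's working \emph{definition} of a partial sign symmetric $P_{0,1}^+$-matrix. Indeed, the paper's own Example~1 calls
\[
\begin{bmatrix}1&1&1\\1&1&1\\1&1&d_3\end{bmatrix}
\]
a partial sign symmetric $P_{0,1}^+$-matrix even though its fully specified block $A(1,2)=\left(\begin{smallmatrix}1&1\\1&1\end{smallmatrix}\right)$ has determinant $0$ and is therefore not a $P_{0,1}^+$-matrix. So there is no proof in the paper to compare yours against.

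Your attempt to derive the Proposition from the standard convention---that a partial $X$-matrix is one whose every fully specified principal submatrix lies in $X$---has a genuine gap in the converse direction of cases (a) and (b). You claim that nonnegativity of all fully specified principal minors, positivity of the specified diagonal entries, and sign symmetry of specified twin pairs are ``\emph{all} the demands the fully specified data can be required to meet.'' They are not: a fully specified principal submatrix $B$ must itself be $P_{0,1}^+$, so for each $k\le|B|$ at least one $k\times k$ principal minor of $B$ must be strictly positive---in particular $\det B>0$. You noticed exactly this obstruction in your final paragraph but applied it only to case (c); it bites equally in (a) and (b). Concretely, let $A$ have the $(1,2)$-block $\left(\begin{smallmatrix}1&1\\1&1\end{smallmatrix}\right)$ and $d_3=1$ specified, with the remaining four off-diagonal entries unspecified. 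Then clause~(ii) holds, yet $A(1,2)$ is not $P_{0,1}^+$, so under the standard convention $A$ is not a partial sign symmetric $P_{0,1}^+$-matrix. Thus the Proposition is actually false against the standard convention, and ``these are all the demands'' is precisely where your argument fails. (A smaller mismatch: clause~(i) as written only forces specified diagonal entries to be nonnegative, whereas your forward direction correctly extracts strict positivity.)
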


In the recent past, there have been various researches concerning matrix completions on $P$- matrices, see (\cite{symmetricP}-\cite{wssP}) and $P_{0}$-matrices, see (\cite{wssP}-\cite{nonnegativeP0}). In 2015 Sarma and Sinha \cite{po+matrices} studied a new class on \enquote{The $P_0^+$-matrix completion problem}. They discussed the necessary and sufficient conditions for a digraph to have $P_0^+$-completion and also singled out digraphs of order at most four having $P_0^+$-completion. Two years later, Sinha \cite{po1+matrices} studied on \enquote{The $P_{0,1}^+$-matrix completion problem}. This paper is motivated by the work of Sinha \cite{po1+matrices} and we are interested in the property of \enquote{sign symmetric} i.e., when the product of twin entries is positive or both twin entries are zeros, that is $a_{ij}a_{ji}>0$ or $a_{ij}=a_{ji}=0$. Our purpose here is to study  \enquote{The sign symmetric $P_{0,1}^+$-matrix completion problem}.

A partial matrix $A$ is said to {\em specify a digraph $D$} if position $a_{ij}$ of $A$ is specified if and only if there is an arc between vertex $v_{i}$ and $v_{j}$ of $D$.

A \emph{digraph  $D=(V_{D},E_{D})$} is a finite non-empty set of positive integers $V_{D}$, whose members are called \emph{vertices} and a set, $E_{D}$, of (ordered) pairs $(v_i,v_j)$ of vertices called the \emph{arc} of D. The \emph{order} of a digraph $D$, denoted by $|D|$, is the number of vertices of $D$. A digraph is \emph{complete} if it includes all possible arcs between its vertices, and is denoted by $K_{n}$, where $n$ is the number of vertices. A complete (di)graph is called a \textit{clique}.

A digraph $D=(V_{D},E_{D})$ is $\textit{isomorphic}$ to the digraph $D^{'}=(V_{D^{'}},E_{D^{'}}) $ if there is a bijection map $\phi $ :$V_{D}$ $\rightarrow$ $V_{D^{'}}$ such that $(v,w) $ $\in $ $E_{D} $ if and only if ($\phi(v),\phi(w)$) $\in$ $E_{D^{'}}$. A \emph{symmetric digraph} is a digraph with the property that $(v_{i},v_{j})$ is an arc if and only if $(v_{j},v_{i})$ is an arc.	An \emph{asymmetric digraph} is a digraph with the property that $(v_{i},v_{j})$ is an arc, then $(v_{j},v_{i})$ is not an arc. 

A digraph $H=(V_{H},E_{H}) $ is a  \emph{sub-digraph} of digraph D if $V_{H}$ $ \subseteq $ $V_{D} $ and $E_{H}$ $\subseteq$ $E_{D}$ \cite{graphtheory}. A \emph{cycle} $C$ in a digraph $D=(V_{D},E_{D})$ is a sub-digraph $(V_{C},E_{C})$ where $V_{C}=\{v_{1},\dots,v_{k}\}$ and $E_{C}=\{ (v_{1},v_{2}),(v_{2},v_{3}),\dots,$ $(v_{k-1},v_{k}),(v_{k},v_{1})\}$ and in this case the \emph{length} of $C$ is $k$ and we call $C$ a $k$-cycle. A 
\emph{loop} is an arc with same initial and terminal vertex i.e., $x=(v_{i},v_{i})$. 

A {\em completion} of a partial matrix is a specific choice of values for the unspecified entries resulting in a required matrix. A (di)graph has {\em sign symmetric $P_{0,1}^+$-completion} if every partial sign symmetric $P_{0,1}^+$-matrix which specifies the (di)graph can be completed to a sign symmetric $P_{0,1}^+$-matrix.

\section{Sign symmetric $P_{0,1}^+$-matrix completion}\label{ss$P_{0,1}^+$-completion} In this section we would like to point out the completion of partial sign symmetric $P_{0,1}^+$-matrices when all diagonal entries are unspecified, some are specified and when all are specified.

It is clear that $1 \times 1$ partial matrix has sign symmetric $P_{0,1}^+$-completion since if it is not specified then a positive value is assigned otherwise it is complete cause the entry is a diagonal entry and by the definition of sign symmetric $P_{0,1}^+$-matrix it is a positive value.

Throughout this section, notations for a partial matrix $A$ are given as follows: diagonal entry at position $(i,i)$ as $d_{i}$, specified non-diagonal entry at position $(i,j)$ as $a_{ij}$, unspecified non-diagonal entry at position $(i,j)$ as $x_{ij}$ and $A_c$ is the completion of a partial matrix $A$.  

We start our discussion for the case where all diagonal entries are specified and are positive, at least one non-diagonal entry is unspecified, product of fully specified twin entries is positive or both are zeros and each fully specified principal minor is nonnegative, results given in Theorem \ref{T: hascompletion}.  

 \begin{theorem} \label{T: hascompletion} A digraph which does not include any loop has sign symmetric $P_{0,1}^+$-completion.
 \end{theorem}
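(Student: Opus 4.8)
The plan is to reduce the statement to a fact about completing the diagonal, and then to push every diagonal entry up to a common large value. Since $D$ contains no loop, no diagonal position of any partial matrix $A$ specifying $D$ is specified, so we are in case (i) of the Proposition: every diagonal entry of $A$ is unspecified, the product of any two fully specified twin entries is positive or both are zero, and — because no principal submatrix of size $\ge 1$ contains a specified diagonal entry and hence none is fully specified — the condition ``every fully specified principal minor is nonnegative'' is vacuous. Thus the only genuine constraint on $A$ is the sign-symmetry condition on off-diagonal twin pairs, and I would state this observation explicitly at the start.

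First I would fix the off-diagonal entries of the completion by a short case analysis on each pair $\{i,j\}$ with $i\neq j$: if both $a_{ij}$ and $a_{ji}$ are specified, keep them (they already satisfy $a_{ij}a_{ji}>0$ or $a_{ij}=a_{ji}=0$); if exactly one is specified, say $a_{ij}$, set $x_{ji}:=a_{ij}$, so the pair becomes either two equal nonzero numbers or two zeros; if neither is specified, set $x_{ij}=x_{ji}:=0$. Let $B$ be the real $n\times n$ matrix carrying these off-diagonal entries and zero diagonal. By construction every off-diagonal twin pair of $B$ has positive product or consists of two zeros, so any matrix obtained from $B$ by inserting positive diagonal entries is automatically sign symmetric.

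Next I would complete the diagonal by taking $d_i:=t$ for all $i$, for a single $t>0$ to be chosen, so that $A_c(t)=tI+B$. For each index set $\alpha\subseteq\{1,\dots,n\}$ the quantity $\det\big(A_c(t)[\alpha]\big)$ is a monic polynomial in $t$ of degree $|\alpha|$, hence positive once $t$ exceeds a threshold depending on $\alpha$; taking $t$ larger than the finitely many such thresholds — equivalently, taking $t$ larger than every row sum of $|B|$, which makes $A_c(t)$ strictly diagonally dominant with positive diagonal — makes every principal minor of $A_c(t)$ positive. A matrix all of whose principal minors are positive is in particular a $P_{0,1}^+$-matrix: every principal minor is then nonnegative, at least one principal minor of each order is positive, and the diagonal entries $t$ are positive. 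Combined with the sign symmetry from the previous step, $A_c(t)$ is a sign symmetric $P_{0,1}^+$-completion of $A$. Since $A$ was an arbitrary partial sign symmetric $P_{0,1}^+$-matrix specifying $D$, it follows that $D$ has sign symmetric $P_{0,1}^+$-completion.

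I do not expect a real obstacle; the points needing care are (a) verifying that the off-diagonal filling preserves sign symmetry in each branch of the case analysis on twin pairs, and (b) justifying that a sufficiently large common diagonal value forces all principal minors to be positive — for which the monic-polynomial degree count is the cleanest argument, with strict diagonal dominance providing an explicit admissible $t$.
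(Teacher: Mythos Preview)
Your proposal is correct and follows essentially the same approach as the paper: fill in unspecified off-diagonal entries so that every twin pair satisfies the sign-symmetry condition, then set all diagonal entries equal to a sufficiently large $t$ and use the fact that each principal minor is a monic polynomial in $t$. The only cosmetic differences are that the paper assigns an unspecified $x_{ji}$ a small value of the appropriate sign rather than $a_{ij}$ itself, and does not spell out the vacuity of the fully-specified-minor condition or the strict-diagonal-dominance bound that you include.
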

 
 \begin{proof}
 	Let $A$ be a $n \times n$ partial sign symmetric $P_{0,1}^+$-matrix. If $a_{ij}$ is specified with $a_{ij} \ne0$ and $x_{ji}$ is not specified then assign $x_{ji}$ an absolutely small value such that $a_{ij}x_{ji}>0$, and if $a_{ij}=0$ then assign $x_{ji}=0$. Next, set sufficiently large $t$ for all diagonal. Any $k\times k$ principal minors will be of the form $t^{k}+ p(t)$, where $p(t)$ is a polynomial of degree less or equal to $k-1$, so for large enough $t$, all $k\times k$ principal minors of a completed matrix $A_c$ are positive for $k=1,\dots,n$. Thus, every partial sign symmetric $P_{0,1}^+$-matrix $A$ that omits all diagonal entries can be completed to a sign symmetric $P_{0,1}^+$-matrix.
 \end{proof}

The following example indicates that, it would be hard or even impossible to complete a partial sign symmetric $P_{0,1}^+$-matrix when some diagonal and non-diagonal entries are specified.
\newline
\newline
\begin{example}\label{example}\normalfont
	Consider the digraph $D_1$ of order 3 having 2 loops.  
		\begin{center}	
			\begin{figure}[th]	
				\hspace{5.5cm}
				\begin{tikzpicture}[scale = 1.35]
				\tikzstyle{m1}=[draw, circle, fill=black, inner sep=2bp];
				\node[m1] (a) at (4,0) {} edge [loop below] ();
				\node at (3.8, 0) {$1$};
				\node[m1] (b) at (4.5,0.7) {}edge [loop above] ();
				\node at (4.7, 0.9, 0) {$2$};
				\node[m1] (c) at (5,0) {}; 
				\node at (5.2, 0) {$3$};

				\draw[<->,>=triangle 45] (a) -- (b);
				\draw[<->,>=triangle 45] (b) -- (c);
				\draw[<->,>=triangle 45] (c) -- (a);
				\end{tikzpicture}
				\caption{Digraph $D_1$ that does not have sign symmetric $P_{0,1}^+$-completion}\label{f:digraphs $D_3(4,1)$} 
			\end{figure}
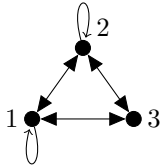
		\end{center}
		
	However the partial sign symmetric $P_{0,1}^+$-matrix $A= \begin{bmatrix}
	1	& 1 & 1 \\
	1	& 1 & 1  \\
	1	& 1 & d_3  
	\end{bmatrix}$ specifying $D_1$ does not have have a sign symmetric $P_{0,1}^+$-completion because any completion of $A$ the first two columns are linearly independent and so for any value of $d_3$, $\det A=0$. 
\end{example}

Partial matrices to be considered next are those with all diagonal entries specified. The following lemma shows that any incomplete digraph having all loops and a 2-cycle lacks sign symmetric $P_{0,1}^+$-completion.
\begin{lemma} \label{2-cycle lacks sign symmetric $P_{0,1}^+$-completion}
	Any incomplete digraph having all loops and a 2-cycle do not have sign symmetric $P_{0,1}^+$-completion. 
\end{lemma}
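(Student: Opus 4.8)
The plan is to prove the lemma by exhibiting, for an arbitrary incomplete digraph $D$ with all loops and a $2$-cycle, one partial sign symmetric $P_{0,1}^+$-matrix that specifies $D$ and admits no sign symmetric $P_{0,1}^+$-completion; by the definition of completion this is exactly what is required. Fix a $2$-cycle of $D$ on two vertices $p,q$, and --- using that $D$ is incomplete while all loops are present --- fix a non-loop arc $(i,j)\notin E_D$; note $\{i,j\}\neq\{p,q\}$ because the arcs $(p,q)$ and $(q,p)$ are present. Let $A$ be the partial matrix specifying $D$ with every diagonal entry equal to $1$, with $a_{pq}=a_{qp}=1$, with every other specified off-diagonal entry equal to $0$, and with the remaining (non-arc) positions unspecified; in particular the $(i,j)$ entry is unspecified. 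As in Example~\ref{example}, the aim is to force a principal minor of every completion to vanish; here the singular, rigid $2\times2$ block on $\{p,q\}$ is meant to do the work.

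The first step is to check, via case (ii) of the Proposition, that $A$ is a legitimate partial sign symmetric $P_{0,1}^+$-matrix. Its diagonal is specified and positive; among fully specified twin pairs only $(a_{pq},a_{qp})$ carries nonzero entries, and its product is $1>0$, all others being $(0,0)$; and any fully specified principal submatrix of $A$ is a direct sum of an identity matrix with at most one copy of $\left(\begin{smallmatrix}1&1\\1&1\end{smallmatrix}\right)$, so every fully specified principal minor is nonnegative. Since $D$ is incomplete, at least one entry is unspecified.

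The second step is to describe any hypothetical sign symmetric $P_{0,1}^+$-completion $A_c$ and derive a contradiction. For each pair $\{u,v\}\neq\{p,q\}$ with at least one of $(u,v),(v,u)$ in $E_D$, the entry of $A$ at that arc is $0$, so sign symmetry forces its twin to be $0$ in $A_c$; hence $A_c$ coincides with $\left(\begin{smallmatrix}1&1\\1&1\end{smallmatrix}\right)\oplus I_{n-2}$ (after reordering so that $p,q$ come first) except possibly in entries indexed by pairs of vertices joined by no arc of $D$. Since the $\{p,q\}$-block of $A_c$ is singular, rows $p$ and $q$ of $A_c$ agree in columns $p$ and $q$; I would then try to force $\det A_c=0$: for each other vertex $w$ one has $\det A_c[\{p,q,w\}]=-(a_{qw}-a_{pw})(a_{wq}-a_{wp})$, and I would combine its nonnegativity with the sign-symmetry inequalities $a_{pw}a_{wp}\ge0$, $a_{qw}a_{wq}\ge0$ and the bounds $a_{pw}a_{wp}\le a_{pp}a_{ww}=1$, $a_{qw}a_{wq}\le1$ to conclude that rows $p$ and $q$ of $A_c$ are equal on column $w$ as well. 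Then $\det A_c=0$, contradicting the requirement that the unique $n\times n$ principal minor of a $P_{0,1}^+$-matrix be positive, and the lemma follows.

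The last step is the main obstacle, and it is genuinely delicate. When the twin $(j,i)$ of the missing arc is itself an arc of $D$, sign symmetry already pins the missing entry to $0$ and the obstruction appears immediately. The hard case is a vertex $w$ joined to neither $p$ nor $q$ by any arc of $D$: a completer then has two free same-sign pairs $a_{pw},a_{wp}$ and $a_{qw},a_{wq}$, and the single inequality $\det A_c[\{p,q,w\}]\ge0$ does not by itself force them to $0$. To close the argument one must show that any nonzero such choice violates the nonnegativity of some larger principal minor of $A_c$ --- in effect, that the singular all-ones $\{p,q\}$-block must remain block-disconnected from the rest of $A_c$ --- and also treat the vertices joined to exactly one of $p,q$ separately. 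Carrying out this propagation of rigidity is the crux of the proof.
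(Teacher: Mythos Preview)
Your overall strategy matches the paper's: exhibit a partial matrix whose $\{p,q\}$-block is singular and argue that no completion can have positive determinant. The paper uses the block $\left(\begin{smallmatrix}1&-1\\-1&1\end{smallmatrix}\right)$ and proceeds by direct computation of $3\times3$ and $4\times4$ determinants followed by an inductive row-reduction; you use $\left(\begin{smallmatrix}1&1\\1&1\end{smallmatrix}\right)$ with the remaining specified entries set to $0$, and instead try to prove that rows $p$ and $q$ must coincide in any completion.

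The gap you flag in the ``hard case'' is not merely delicate --- it is fatal for the partial matrix you chose. Take $n=3$ and $D=D_3(2,1)$, the minimal instance of the lemma: your $A$ is
\[
\begin{pmatrix}1&1&x_{13}\\1&1&x_{23}\\x_{31}&x_{32}&1\end{pmatrix},
\]
and the assignment $x_{13}=2,\ x_{23}=1,\ x_{31}=0.3,\ x_{32}=0.9$ is a genuine sign symmetric $P_{0,1}^+$-completion: the twin products are $1,\ 0.6,\ 0.9>0$; the $2\times2$ principal minors are $0,\ 0.4,\ 0.1$; and the determinant is $0.6>0$. Thus rows $p$ and $q$ need \emph{not} agree on a column $w$ joined to neither, and the ``propagation of rigidity'' you hope to carry out simply does not hold --- the constraints you list, namely $\det A_c[\{p,q,w\}]\ge0$ together with $0<a_{pw}a_{wp}\le1$ and $0<a_{qw}a_{wq}\le1$, are compatible with $a_{pw}\ne a_{qw}$. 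To make this route work you would need a different witness partial matrix rather than a sharper analysis of this one.
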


\begin{proof}
	The proof is by induction.\\
	First, for a digraph of order 2, if it contains 2-cycle then is a complete digraph.\\
	We start with an incomplete digraph of order 3 having all loops and a 2-cycle.
	
	Consider the digraph $D_3(2,1)$ in Figure \ref{f:digraphs $D_3(2,1)$}. 
			\begin{center}	
				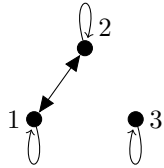
\begin{figure}[th]	
					\hspace{5.5cm}
					\begin{tikzpicture}[scale = 1.35]
					\tikzstyle{m1}=[draw, circle, fill=black, inner sep=2bp];
					\node[m1] (a) at (4,0) {} edge [loop below] ();
					\node at (3.8, 0) {$1$};
					\node[m1] (b) at (4.5,0.7) {} edge [loop above] ();
					\node at (4.7, 0.9, 0) {$2$};
					\node[m1] (c) at (5,0) {} edge [loop below] ();
					\node at (5.2, 0) {$3$};

					\draw[<->,>=triangle 45] (a) -- (b);
					\end{tikzpicture}
					\caption{A Digraph $D_3(2,1)$}\label{f:digraphs $D_3(2,1)$} 
				\end{figure}
			\end{center}

	Let $A= \begin{bmatrix}
	1	& -1 & x_{13} \\
	-1	& 1 & x_{23}  \\
	x_{31}	& x_{32} & 1 
	\end{bmatrix}$ be partial sign symmetric $P_{0,1}^+$-matrix specifying incomplete digraph $D_3(2,1)$ (For more information on digraphs nomenclature, see the book by Harary \cite{graph}). 
	
	We compute the determinant of $A$, $\det A=-x_{13}x_{31}-x_{13}x_{32}-x_{23}x_{31}-x_{23}x_{32}$ which is impossible to assign values to all unspecified entries such that $\det A>0$ and $x_{ij}x_{ji}>0$ or $x_{ij}=x_{ji}=0$, and therefore $D_3(2,1)$ do not have sign symmetric $P_{0,1}^+$-completion. For any other 2-cycle digraph of order 3, we assume any additional specified entry is equal to 0 and hence making the digraph lack sign symmetric $P_{0,1}^+$-completion. 
	
	Next, we consider a 2-cycle digraph of order 4.
		\begin{center}	
			\begin{figure}[th]	
				\hspace{5.5cm}
				\begin{tikzpicture}[scale = 1.35]
				\tikzstyle{m1}=[draw, circle, fill=black, inner sep=2bp];
				\node[m1] (a) at (9,0) {} edge [loop below] ();
				\node at (8.8, -0.2) {$1$};
				\node[m1] (b) at (9,1) {} edge [loop above] ();
				\node at (8.8, 1.2) {$2$};
				\node[m1] (c) at (10,1) {} edge [loop above] ();
				\node at (10.2, 1.2) {$3$};
				\node[m1] (d) at (10,0) {} edge [loop below] ();
				\node at (10.2, -0.2) {$4$};
				
				\draw[<->,>=triangle 45] (a) -- (b);

				\end{tikzpicture}
				\caption{Digraph $D_4(2,1)$}\label{f: D_4(2,1)}
			\end{figure}
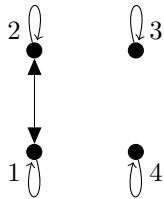
		\end{center}
		
	The partial sign symmetric $P_{0,1}^+$-matrix $A= \begin{bmatrix}
	1	& -1 & x_{13} & x_{14}\\
	-1	& 1 & x_{23} & x_{24} \\
	x_{31}	& x_{32} & 1 & x_{34} \\
	x_{41}	& x_{42} & x_{43} & 1 
	\end{bmatrix}$ 	specifies incomplete digraph $D_4(2,1)$ in Figure \ref{f: D_4(2,1)}. 
	
	Let us first try to complete the partial principal sub-matrix $A(1,2,3)$.
	
	$\det A(1,2,3)=-x_{13}x_{31}-x_{13}x_{32}-x_{23}x_{31}-x_{23}x_{32}$, since every principal minors needs to be nonnegative then the only possible values of the unspecified entries of $A(1,2,3)$ are zeros and thus yielding the partial matrix below
	
	$$A= \begin{bmatrix}
	1	& -1 & 0 & x_{14}\\
	-1	& 1 & 0 & x_{24} \\
	0	& 0 & 1 & x_{34} \\
	x_{41}	& x_{42} & x_{43} & 1 
	\end{bmatrix}$$
	
	Again, we consider the partial principal sub-matrix  $A(1,2,4)$, its determinant is $\det A(1,2,4)=-x_{14}x_{41}-x_{14}x_{42}-x_{24}x_{41}-x_{24}x_{42}$, similarly, for it to be nonnegative, we need to set $x_{14}=x_{24}=x_{41}=x_{42}=0$ and we obtain the partial matrix below.

	$$A= \begin{bmatrix}
	1	& -1 & 0 & 0\\
	-1	& 1 & 0 &0 \\
	0	& 0 & 1 & x_{34} \\
	0	& 0 & x_{43} & 1 
	\end{bmatrix}$$

The matrix above has $\det A=0$ for any values of $x_{34}$ and $x_{43}$, hence the partial sign symmetric $P_{0,1}^+$-matrix $A$ cannot be completed to a sign symmetric $P_{0,1}^+$-matrix.

Also, using row reduction process, row one plus row two to gives row 2 causing the second row to have zeros and so, by the properties of determinants, the determinant of $A$ is zero i.e., $\det A=0$. 

By same argument, for a $n\times n$ partial sign symmetric $P_{0,1}^+$-matrix $A$ specifying a digraphs of order $n$ with 2-cycle in $v_i$ and $v_j$ for $i<j<n$, where $n>4$. Assuming $a_{ij}=a_{ji}=-1$ forces $\det A(i,j,n)=0$ for the choices of $x_{in}=x_{jn}=x_{ni}=x_{nj}=0$ otherwise $\det A(i,j,n)<0$. By row reduction and properties of determinants we get $\det A=0$. Whenever we have more arcs the we assume addition specified  entries are zeros, making the determinant to be always zero. Therefore, any incomplete digraph containing 2-cycle lacks sign symmetric $P_{0,1}^+$-completion.

\end{proof}

We now collect some results from the literature that will be used in the next section.

\begin{lemma} \label{ssP0+l}(\cite{relationships}, Corollary 3.1)
	Any digraph that has sign symmetric $P_{0,1}^+$-completion also has sign symmetric $P$-completion.
\end{lemma}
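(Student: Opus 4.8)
The plan is to observe that a partial sign symmetric $P$-matrix specifying $D$ is automatically a partial sign symmetric $P_{0,1}^+$-matrix, invoke the hypothesis to obtain a sign symmetric $P_{0,1}^+$-completion, and then perturb that completion into a sign symmetric $P$-matrix without altering the specified entries.

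Concretely, assume $D$ has sign symmetric $P_{0,1}^+$-completion and let $A$ be any partial sign symmetric $P$-matrix specifying $D$. Every fully specified principal submatrix of $A$ is a $P$-matrix, so all of its principal minors are positive, hence nonnegative; every specified diagonal entry is a $1\times 1$ principal minor, hence positive; and the fully specified twin entries satisfy $a_{ij}a_{ji}>0$ or $a_{ij}=a_{ji}=0$ by hypothesis. Matching this against cases (i)--(iii) of Proposition 1, $A$ is a partial sign symmetric $P_{0,1}^+$-matrix; and if all entries of $A$ are specified (case (iii)) then $A$ itself is a sign symmetric $P$-matrix and there is nothing to prove. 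Hence $A$ has a completion $A_c$ that is a sign symmetric $P_{0,1}^+$-matrix: it is sign symmetric, has positive diagonal, all of its principal minors are nonnegative, and $\det A_c>0$.

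It remains to move $A_c$ to a completion of $A$ whose principal minors are all strictly positive. When $D$ has no loops this is immediate: every diagonal entry of $A$ is unspecified, and for any $\epsilon>0$ the matrix $A_c+\epsilon I$ satisfies $\det\!\big((A_c+\epsilon I)[S]\big)=\sum_{U\subseteq S}\epsilon^{\,|S|-|U|}\det A_c[U]\ge\epsilon^{\,|S|}>0$ for every index set $S$, since $A_c$ is a $P_0$-matrix; this perturbation changes no off-diagonal entry and no specified entry, so it yields a sign symmetric $P$-completion of $A$ (which in fact recovers Theorem~\ref{T: hascompletion} for loopless digraphs). In the presence of loops one may only move the unspecified entries: I would add $\epsilon$ to each unspecified diagonal entry and perturb the unspecified off-diagonal entries by small amounts, always moving the two members of a twin pair together and pushing a twin pair equal to $(0,0)$ in $A_c$ to $(\pm\delta,\pm\delta)$ with a common sign, so that sign symmetry is preserved. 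Principal minors that are already positive in $A_c$ remain positive under a small enough perturbation, so the task reduces to arranging that each of the finitely many principal minors that vanish in $A_c$ becomes strictly positive; for such an index set $S$ the submatrix $A_c[S]$ is not fully specified (a fully specified principal submatrix of $A$ is a $P$-matrix, hence nonsingular), so it carries an unspecified position $(i,j)$ with $i,j\in S$, which supplies a perturbation coordinate influencing $\det A_c[S]$.

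The main obstacle is exactly this last step: one must exhibit a \emph{single} admissible perturbation of the unspecified entries that \emph{simultaneously} lifts every vanishing principal minor of $A_c$ to a positive value --- not merely a separate direction for each minor --- while respecting the sign symmetry constraint, which is not an open condition because of the ``$a_{ij}=a_{ji}=0$'' alternative, and holding all specified entries fixed. In particular one must rule out, using the hypothesis that $D$ has sign symmetric $P_{0,1}^+$-completion, that a vanishing minor lies on a set all of whose unspecified positions are pinned to zero by sign symmetry, since then no completion of $A$ could be a $P$-matrix. This is the general perturbation fact relating the classes of sign symmetric $P$- and $P_{0,1}^+$-matrices that is recorded as Corollary 3.1 in \cite{relationships}.
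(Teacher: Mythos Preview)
The paper does not supply its own proof of this lemma; it is quoted as Corollary~3.1 of \cite{relationships} and used as a black box. So there is no in-paper argument to compare against. That said, your proposal carries a genuine gap, and you yourself name it in the final paragraph.

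Your reduction is sound: a partial sign symmetric $P$-matrix specifying $D$ is indeed a partial sign symmetric $P_{0,1}^+$-matrix, and the hypothesis then yields a sign symmetric $P_{0,1}^+$-completion $A_c$. The difficulty lies entirely in the perturbation step. You observe that every vanishing principal minor of $A_c$ is supported on an index set $S$ containing at least one unspecified position, and you propose to move that position to push $\det A_c[S]$ off zero. But this is only a \emph{local} observation for each $S$ separately: you have not produced a single admissible perturbation that repairs all vanishing minors simultaneously, nor ruled out that lifting one minor drives another negative. The sign symmetry constraint makes this genuinely delicate, because the admissible set of completions is not open---a twin pair sitting at $(0,0)$ in $A_c$ can only be moved along the two rays with $a_{ij}a_{ji}>0$, and the choice of ray may interact differently with different minors. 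Your last sentence then resolves the issue by invoking ``the general perturbation fact \dots\ recorded as Corollary~3.1 in \cite{relationships}''---but that is exactly the statement under proof, so as written the argument is circular.

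To complete the argument you would need the actual density/perturbation lemma from \cite{relationships} (or the corresponding machinery from Hogben's framework in \cite{relatedclasses}) establishing that, within the fibre of sign symmetric completions of a fixed partial sign symmetric $P$-matrix, the $P$-matrices are dense among the $P_{0,1}^+$-matrices. Without that ingredient, the proof is a correct outline but not a proof.
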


\begin{lemma} \label{assP0+l}(\cite{relationships}, Corollary 5.2)
	Any asymmetric digraph that has sign symmetric $P$-completion also has sign symmetric $P_{0,1}^+$-completion.
\end{lemma}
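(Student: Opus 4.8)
The plan is to deduce the lemma directly from the definition of sign symmetric $P$-completion, together with the observation that every sign symmetric $P$-matrix is already a sign symmetric $P_{0,1}^+$-matrix. The crucial structural fact is that an asymmetric digraph contains no loop: if $(v_i,v_i)$ were an arc, asymmetry would force $(v_i,v_i)$ not to be an arc. So let $D$ be asymmetric with sign symmetric $P$-completion and let $A$ be an arbitrary partial sign symmetric $P_{0,1}^+$-matrix specifying $D$. Since $D$ has no loop, every diagonal position of $A$ is unspecified; hence every principal submatrix of $A$ of order at least one contains an unspecified diagonal entry, so $A$ has no fully specified principal minor of positive order. Moreover, asymmetry means that for $i\neq j$ at most one of the twin entries $a_{ij},a_{ji}$ is specified, so the condition on products of fully specified twin entries is vacuous. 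Consequently $A$ satisfies the requirements of a partial sign symmetric $P$-matrix (an unspecified diagonal entry is present, and both the principal-minor condition and the twin-product condition hold vacuously), as well as those of a partial sign symmetric $P_{0,1}^+$-matrix.

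Next I would record the inclusion: if $M$ is a sign symmetric $P$-matrix then every principal minor of $M$ is positive, so for each $k$ all $k\times k$ principal minors are nonnegative and at least one is positive, the diagonal entries of $M$ (its $1\times 1$ minors) are positive, and the relation $m_{ij}m_{ji}>0$ or $m_{ij}=m_{ji}=0$ is inherited without change; thus $M$ is a sign symmetric $P_{0,1}^+$-matrix. Combining this with the first paragraph: since $D$ has sign symmetric $P$-completion and $A$ is a partial sign symmetric $P$-matrix specifying $D$, there is a sign symmetric $P$-completion $A_c$ of $A$; by the inclusion $A_c$ is a sign symmetric $P_{0,1}^+$-matrix, and it agrees with $A$ on the specified entries, so $A_c$ is a sign symmetric $P_{0,1}^+$-completion of $A$. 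As $A$ was arbitrary, $D$ has sign symmetric $P_{0,1}^+$-completion. (In the setting of this paper the conclusion also follows at once from Theorem \ref{T: hascompletion}, since an asymmetric digraph has no loop.)

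The one point that needs care — what I would regard as the main obstacle — is the interpretation of \enquote{asymmetric}. With the definition adopted here (asymmetry forbids loops) the argument above is immediate, and the hypothesis on sign symmetric $P$-completion is used only through the last line. If one instead worked with a convention allowing loops in an \enquote{asymmetric} digraph, some diagonal entries of $A$ would be specified, $A$ might fail to be a partial sign symmetric $P$-matrix because a fully specified principal minor could vanish, and one would have to replace $A$ by $A+\varepsilon I$ (legitimate since each fully specified principal submatrix of $A$ is a $P_0$-matrix and $\det(M+\varepsilon I)=\varepsilon^{k}+\dots>0$ for $\varepsilon>0$), complete each $A+\varepsilon I$ to a sign symmetric $P$-matrix $C_\varepsilon$, and then let $\varepsilon\to 0^{+}$. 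The genuinely delicate part would be controlling the unspecified entries of the $C_\varepsilon$ — using the $2\times 2$ principal minor bounds and forcing the twin of every specified zero to vanish — so that a subsequential limit exists and remains a sign symmetric $P_{0,1}^+$-matrix. Under the definitions in force in this paper none of this machinery is needed.
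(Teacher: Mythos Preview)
The paper does not prove this lemma itself; it is quoted verbatim from \cite{relationships}, Corollary~5.2, so there is no in-paper argument to compare against. Your direct argument---show that a partial sign symmetric $P_{0,1}^+$-matrix specifying an asymmetric $D$ is automatically a partial sign symmetric $P$-matrix, complete it to a sign symmetric $P$-matrix using the hypothesis, and then use the inclusion $P\subset P_{0,1}^+$---is the standard one and is correct.

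One remark on your last paragraph: the $\varepsilon$-perturbation and limiting procedure you sketch is never needed, even under the ``all diagonal entries specified'' convention that is actually in force in Theorem~\ref{hasssp01+completion}. The point is that asymmetry of the off-diagonal pattern already guarantees that for every pair $i\neq j$ at least one of $a_{ij},a_{ji}$ is unspecified, so no principal submatrix of order $\ge 2$ is fully specified. The only fully specified principal minors are therefore the $1\times 1$ diagonal ones, and these are positive because $A$ is a partial $P_{0,1}^+$-matrix. Hence $A$ is a partial sign symmetric $P$-matrix outright, and your first-paragraph argument goes through unchanged. Your worry that ``a fully specified principal minor could vanish'' cannot arise for an asymmetric pattern.
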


The results obtained from this section and the available literature are now enough for complete classifications of order at most four digraphs having sign symmetric $P_{0,1}^+$-completion. 
\section{Classifications of order at most four digraphs having sign symmetric $P_{0,1}^+$-completion}\label{classification}

In this section we give complete classifications of digraphs having order at most four as sign symmetric $P_{0,1}^+$-completion or not. Digraphs will be denoted $D_{p}(q,n)$, where $p$ is the number of vertices, $q$ is the number of arcs and $n$ is the digraph number, i.e., this helps in distinguishing digraphs with same vertices and arcs which are identified, the reader can consult the nice book \cite{graph}. 
\begin{theorem} \label{hasssp01+completion}
	The digraphs $D_{p}(q,n)$ for $1\le p \le 4$ listed below have sign symmetric $P_{0,1}^+$-completion.
	\begin{align*}  
	p &= 1; &\quad  & &\quad  &\\
	p &= 2, &\quad q &= 0,1,2; &\quad  &\\
	p &= 3, &\quad q &= 0,1,6; &\quad  &\\
	& &\quad q &= 2, &\quad n &= 2-4;\\
	& &\quad q &= 3, &\quad n &= 3;\\
	p &= 4, &\quad q &= 0,1,12; &\quad  &\\ 
	& &\quad q &= 2, &\quad n &= 2-5;\\ 
	& &\quad q &= 3, &\quad n &= 4-11,13;\\
	& &\quad q &= 4, &\quad n &= 16-19,21-23,25-27;\\
	& &\quad q &= 5, &\quad n &= 29,31,33,34,36,37;\\ 
	& &\quad q &= 6, &\quad n &= 46.\\
	\end{align*}
\end{theorem}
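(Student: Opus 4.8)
The plan is a case analysis over the finitely many digraphs of order at most four: I would verify sign symmetric $P_{0,1}^+$-completion for each one named in the list, grouping the cases by loop structure so that the general results already established carry most of the load. First, every loopless digraph in the list is settled at once by Theorem~\ref{T: hascompletion}; this disposes of the entries with $q=0$, and also of $K_1,\dots,K_4$ when complete digraphs are read as loopless (if instead a complete digraph is taken to carry all loops, a partial matrix specifying it is fully specified and is its own completion by part~(iii) of the Proposition). So the real work concerns the listed digraphs that carry at least one loop.

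For such a digraph $D$ that nonetheless has at least one \emph{unspecified} diagonal entry, I would complete an arbitrary partial sign symmetric $P_{0,1}^+$-matrix $A$ specifying $D$ by: giving every unspecified diagonal entry a common value $t$; for each specified $a_{ij}\ne 0$ whose transpose position is unspecified, setting $x_{ji}$ to a small value with $\mathrm{sign}(x_{ji})=\mathrm{sign}(a_{ij})$, so $a_{ij}x_{ji}>0$, and setting $x_{ji}=0$ when $a_{ij}=0$; and setting every remaining unspecified off-diagonal entry equal to $0$. Any $k\times k$ principal minor of the resulting $A_c$ is then a polynomial in $t$ whose leading term is $t^{m}$ times the principal minor carried by the $k-m$ specified-diagonal vertices of that index set, where $m$ counts the unspecified diagonal entries among the $k$; hence for $t$ large enough every principal minor of $A_c$ is positive, provided those sub-minors can be made positive by the choice of the small parameters, and for the digraphs in the list one checks that they can. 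The configurations where this recipe genuinely breaks down are those having a 2-cycle together with all diagonal entries specified, and by Lemma~\ref{2-cycle lacks sign symmetric $P_{0,1}^+$-completion} and Example~\ref{example} such shapes do not appear in the list.

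The delicate cases, and the place I expect the main obstacle, are the listed digraphs with a loop at every vertex (so no free diagonal to enlarge) that are not complete. By Lemma~\ref{2-cycle lacks sign symmetric $P_{0,1}^+$-completion} such a digraph can occur in the list only if it is asymmetric, and then Lemma~\ref{assP0+l} applies: it suffices to show the digraph has sign symmetric $P$-completion, which for asymmetric digraphs of order at most four can be read from the available sign symmetric $P$-matrix completion results. If a self-contained argument is preferred, one cannot merely make the off-diagonal entries small — a directed triangle with a loop at each vertex already shows that a small sign-forced perturbation can push the $3\times 3$ minor negative — so the completion must be tailored to the arc pattern, typically by choosing the unspecified entries so that $A_c$ becomes triangular after a simultaneous permutation of rows and columns, which makes every principal minor a product of positive diagonal entries. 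I would lay the whole argument out as a table indexed by $(q,n)$: Theorem~\ref{T: hascompletion} for the loopless rows, the recipe above for the rows with a free diagonal, and Lemma~\ref{assP0+l} or an ad hoc triangular completion for the few remaining rows. The other point needing care is purely bookkeeping: matching each label $D_p(q,n)$ to a concrete digraph and confirming that it avoids the forbidden shapes of Lemma~\ref{2-cycle lacks sign symmetric $P_{0,1}^+$-completion} and Example~\ref{example}.
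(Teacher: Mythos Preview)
Your case analysis rests on a misreading of the notation. In the Harary--Hogben catalogue used here, the label $D_p(q,n)$ records only the $q$ \emph{off-diagonal} arcs; the convention throughout the classification is that every diagonal entry is specified (equivalently, every vertex carries a loop---see Figure~\ref{f:digraphs $D_3(2,1)$}, where $D_3(2,1)$ is drawn with all three loops). Consequently Theorem~\ref{T: hascompletion} applies to \emph{none} of the digraphs in the list, and your entire middle case (``at least one unspecified diagonal entry, so inflate a free $t$'') is vacuous. What you call the ``delicate'' minority case---all loops present, digraph incomplete---is in fact the only nontrivial case.

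Within that case you do land on the paper's argument: the listed digraphs other than the empty and complete ones are asymmetric and are already known to have sign symmetric $P$-completion (from \cite{wssP}), so Lemma~\ref{assP0+l} finishes. That is exactly how the paper proceeds; there is no need for ad hoc triangular completions (and your triangularisation idea would fail anyway for digraphs such as a directed $3$-cycle, which cannot be permuted to triangular form). Finally, note that the paper treats the theorem as a full classification and supplies the converse as well: the unlisted digraphs fail either because they contain a $2$-cycle (Lemma~\ref{2-cycle lacks sign symmetric $P_{0,1}^+$-completion}) or because they lack sign symmetric $P$-completion (Lemma~\ref{ssP0+l}). Your proposal omits this direction entirely.
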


\begin{proof}
	Part 1: Digraphs that have sign symmetric $P_{0,1}^+$-completion.
	
	First, digraphs $D_{p}(0,1)$ for $1\le p\le4$ have sign symmetric $P_{0,1}^+$-completion since every partial matrix specifying these digraphs (null graphs) can be completed by assigning all unspecified entries to be zeros.
	
	It is clear that $D_{1}(0,1)$, $D_{2}(2,1)$, $D_{3}(6,1)$ and $D_{4}(12,1)$ have sign symmetric $P_{0,1}^+$-completion since matrices specifying digraphs are complete by definition of a partial sign symmetric $P_{0,1}^+$-matrix. 
	
	The following digraphs have sign symmetric $P$-completion [\cite{wssP}, Theorem 4.4] and they are asymmetric digraphs. By Lemma \ref{assP0+l}, they also have sign symmetric $P_{0,1}^+$-completion
	\begin{align*}  
	p &= 2, &\quad q &= 1; &\quad  &\\
	p &= 3, &\quad q &= 1; &\quad  &\\
	& &\quad q &= 2, &\quad n &= 2-4;\\
	& &\quad q &= 3, &\quad n &= 3;\\
	p &= 4, &\quad q &= 1; &\quad  &\\ 
	& &\quad q &= 2, &\quad n &= 2-5;\\ 
	& &\quad q &= 3, &\quad n &= 4-11,13;\\
	& &\quad q &= 4, &\quad n &= 16-19,21-23,25-27;\\
	& &\quad q &= 5, &\quad n &= 29,31,33,34,36,37;\\ 
	& &\quad q &= 6, &\quad n &= 46.\\
	\end{align*}

	Part 2: Digraphs that do not have sign symmetric $P_{0,1}^+$-completion.
	
	The following are incomplete digraphs having contains a 2-cycle and according to Lemma \ref{2-cycle lacks sign symmetric $P_{0,1}^+$-completion}, they do not have sign symmetric $P_{0,1}^+$-completion
	\begin{align*}  
	p &= 3, &\quad q &= 2 &\quad n &= 1;\\
	& &\quad q &= 3, &\quad n &= 1,4;\\
	& &\quad q &= 4,5; &\quad  &\\
	p &= 4, &\quad q &= 2, &\quad n &=1\\ 
	& &\quad q &= 3, &\quad n &= 1-3;\\ 
	& &\quad q &= 4, &\quad n &= 1-15;\\
	& &\quad q &= 5, &\quad n &= 1-28;\\
	& &\quad q &= 6, &\quad n &= 1-44;\\ 
	& &\quad q &= 7,8,9,10,11. &\quad  &\\
	\end{align*}
	
	The following digraphs do not have sign symmetric $P$-completion [\cite{wssP}, Theorem 4.4], and hence by Lemma \ref{ssP0+l}, they do not have sign symmetric $P_{0,1}^+$-completion (this list does not include those that fall under this rule but were already listed in a previous list):
	\begin{align*}  
	p &= 3, &\quad q &= 3 &\quad n &= 2;\\
	p &= 4, &\quad q &= 3, &\quad n &=12\\ 
	& &\quad q &= 4, &\quad n &= 20,24;\\
	& &\quad q &= 5, &\quad n &= 30,32,35,38;\\
	& &\quad q &= 6, &\quad n &= 45,47,48.\\ 
	\end{align*}

\end{proof}

\section{Comparisons between sign symmetric $P_{0,1}^+$-completion and other matrix completions} \label{comparison ssP_{0,1}^+-completion}
In this section we show comparisons between sign symmetric $P_{0,1}^+$-completion and other matrix completions: sign symmetric $P$-completion, sign symmetric $P_{0,1}$-completion, sign symmetric $P_0^+$-completion and sign symmetric $P_0$-completion.

First,  sign symmetric $P$-matrices are sign symmetric $P_{0,1}^+$-matrices, sign symmetric $P_{0,1}^+$-matrices are sign symmetric $P_{0,1}$-matrices and sign symmetric $P_{0,1}$-matrices are sign symmetric $P_0$-matrices.

Second, sign symmetric $P$-matrices are sign symmetric $P_{0,1}^+$-matrices, sign symmetric $P_{0,1}^+$-matrices are sign symmetric $P_0^+$-matrices and sign symmetric $P_0^+$-matrices are sign symmetric $P_0$-matrices.

We have already given some relationships in Lemmas \ref{ssP0+l} and \ref{assP0+l}, and we are giving more relationships here.

\begin{lemma} \label{symetricp}(\cite{relatedclasses}, Theorem 2.12)
	Any digraph that has sign symmetric $P_{0,1}$-completion also has sign symmetric $P$-completion.
\end{lemma}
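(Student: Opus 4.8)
The plan is to combine two results already available in the excerpt. By Lemma \ref{symetricp}, any digraph $D$ with sign symmetric $P_{0,1}$-completion has sign symmetric $P$-completion. We would therefore like to chain this with a result of the form ``sign symmetric $P$-completion implies sign symmetric $P_{0,1}^+$-completion.'' The only such implication stated in the excerpt is Lemma \ref{assP0+l}, but that one carries the hypothesis that $D$ be \emph{asymmetric}. So the cleanest route is: given a digraph $D$ with sign symmetric $P_{0,1}$-completion, first invoke Lemma \ref{symetricp} to get sign symmetric $P$-completion, then — if $D$ is asymmetric — invoke Lemma \ref{assP0+l} to conclude sign symmetric $P_{0,1}^+$-completion. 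This would prove the statement for asymmetric digraphs immediately.

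For the non-asymmetric case, the strategy is different and actually simpler: one should argue directly that a non-asymmetric incomplete digraph cannot have sign symmetric $P_{0,1}^+$-completion at all. This is essentially the content advertised in the abstract (``any non-asymmetric incomplete digraph lacks sign symmetric $P_{0,1}^+$-completion''), and Lemma \ref{2-cycle lacks sign symmetric $P_{0,1}^+$-completion} is the tool: a non-asymmetric digraph contains a $2$-cycle, and if the digraph is incomplete one can reduce (by setting any superfluous specified entries to zero and choosing the twin entries of the $2$-cycle to be $\pm1$) to the situation of that lemma, forcing a vanishing determinant on a principal submatrix. Hence if $D$ is non-asymmetric and incomplete, it has no sign symmetric $P_{0,1}^+$-completion, so the implication ``$D$ has sign symmetric $P_{0,1}$-completion $\Rightarrow$ $D$ has sign symmetric $P$-completion'' is vacuously irrelevant for the $P_{0,1}^+$ side; but note the statement to be proved only asserts the $P_{0,1}^+$ $\to$ something relationship, so we must be careful about exactly what is claimed. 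Rereading, the final displayed statement is Lemma \ref{symetricp} itself, quoted verbatim from \cite{relatedclasses}: ``Any digraph that has sign symmetric $P_{0,1}$-completion also has sign symmetric $P$-completion.''

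So the actual task is to prove this $P_{0,1}$-to-$P$ containment. The natural approach is the standard completion-class inclusion argument: let $D$ be a digraph with sign symmetric $P_{0,1}$-completion and let $B$ be a partial sign symmetric $P$-matrix specifying $D$. First observe that every partial sign symmetric $P$-matrix is in particular a partial sign symmetric $P_{0,1}$-matrix (since positive principal minors are nonnegative, diagonal entries are positive, and the twin-sign condition is the same), so $B$ admits a sign symmetric $P_{0,1}$-completion $B_c$. The difficulty is that $B_c$ need only have nonnegative principal minors, not strictly positive ones, so $B_c$ need not be a sign symmetric $P$-matrix. The fix is a perturbation: replace the diagonal by $d_i + t$ and show that for all sufficiently small $t>0$ the completion becomes a genuine sign symmetric $P$-matrix, using that each $k\times k$ principal minor, as a polynomial in $t$, either is identically the minor of $B_c$ (already $\ge 0$) plus a nonnegative leading contribution, or — the point that needs care — can be shown to be strictly positive for small $t>0$ because the specified part of $B$ already pins down at least one strictly positive principal minor of each size. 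Alternatively, one can cite the known structural fact that the specified entries of $B$ being a partial $P$-matrix and the existence of a $P_{0,1}$-completion together force, via a limiting/genericity argument, a $P$-completion.

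I expect the main obstacle to be precisely this last point: upgrading ``all principal minors $\ge 0$'' to ``all principal minors $>0$'' while preserving the sign-symmetry pattern and the agreement with the specified entries of $B$. The diagonal-shift trick handles the top-degree behavior cleanly (a $k\times k$ minor is $t^k + (\text{lower order})$), but one must verify that no principal minor can be forced to remain zero — this is where the hypothesis that $B$ is a partial \emph{$P$}-matrix, not merely a partial $P_{0,1}$-matrix, does the work, since a fully specified principal submatrix of $B$ is already a $P$-matrix. For principal submatrices that are not fully specified, the shift $d_i \mapsto d_i + t$ makes the minor a polynomial in $t$ with positive leading coefficient, hence positive for small $t>0$ after possibly also perturbing any unspecified off-diagonal entries to keep the twin products positive; since there are finitely many principal minors, a single common choice of $t>0$ works. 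That bookkeeping — simultaneously small enough $t$ for finitely many polynomial positivity conditions, and small enough off-diagonal perturbations to preserve $a_{ij}a_{ji}>0$ — is routine but is the crux, and is the part I would write out most carefully.
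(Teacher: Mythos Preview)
The paper does not prove this lemma at all: it is quoted verbatim from \cite{relatedclasses} (Theorem~2.12 there) and used as a black box. So there is no ``paper's own proof'' to match; your task reduces to whether your sketch stands on its own.

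It does not, for two reasons. First, roughly half of your write-up is a digression about sign symmetric $P_{0,1}^+$-completion, Lemma~\ref{assP0+l}, and Lemma~\ref{2-cycle lacks sign symmetric $P_{0,1}^+$-completion}, none of which bear on the statement; you eventually notice this yourself, but the detour should be deleted outright. Second, and more substantively, your perturbation step is applied at the wrong place. You take a sign symmetric $P_{0,1}$-completion $B_c$ of the partial sign symmetric $P$-matrix $B$ and then propose to ``replace the diagonal by $d_i+t$''. But the diagonal entries are \emph{specified} entries of $B$; shifting them produces a matrix that is no longer a completion of $B$, so even if $B_c+tI$ were a $P$-matrix you would not have exhibited a $P$-completion of $B$.

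The standard fix (and the argument actually given in \cite{relatedclasses}) reverses the order: first perturb the \emph{partial} matrix, then complete, then undo the perturbation. Concretely, for sufficiently small $\varepsilon>0$ the partial matrix $B-\varepsilon I$ is still a partial sign symmetric $P$-matrix (only finitely many fully specified principal minors, all strictly positive, so they remain positive under a small diagonal shift), hence in particular a partial sign symmetric $P_{0,1}$-matrix. By hypothesis on $D$ it admits a sign symmetric $P_{0,1}$-completion $C$. Then $C+\varepsilon I$ is a completion of $B$, is sign symmetric (diagonal shifts do not affect twin products), and is a $P$-matrix by the classical characterization that $A$ is $P_0$ if and only if $A+\varepsilon I$ is $P$ for every $\varepsilon>0$. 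This closes the gap your version leaves open.
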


\begin{lemma} \label{assymetricp}(\cite{relatedclasses}, Theorem 2.13)
	Any asymmetric digraph that has sign symmetric $P$-completion also has sign symmetric $P_{0,1}$-completion.
\end{lemma}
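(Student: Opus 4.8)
The plan is to observe that, for an \emph{asymmetric} digraph $D$, the partial sign symmetric $P$-matrices specifying $D$ are literally the same partial matrices as the partial sign symmetric $P_{0,1}$-matrices specifying $D$, so that the sign symmetric $P$-completion hypothesis transfers to the $P_{0,1}$ setting with essentially no work. First I would record the trivial containment at the level of full matrices: a $P$-matrix has every principal minor positive, hence nonnegative, hence is a $P_0$-matrix, and its diagonal entries --- the $1\times 1$ principal minors --- are positive, so it is a $P_{0,1}$-matrix; since the sign-symmetry requirement ``$a_{ij}a_{ji}>0$ or $a_{ij}=a_{ji}=0$'' is verbatim the same in both classes, every sign symmetric $P$-matrix is a sign symmetric $P_{0,1}$-matrix. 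Consequently any sign symmetric $P$-completion of a partial matrix is automatically a sign symmetric $P_{0,1}$-completion of it.

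Next I would carry out the combinatorial step. Let $A$ be a partial sign symmetric $P_{0,1}$-matrix specifying an asymmetric digraph $D$. Because $D$ is asymmetric, for each pair $i\ne j$ at most one of the twin positions $a_{ij},a_{ji}$ is specified; hence no twin pair of off-diagonal positions is fully specified and, more to the point, no principal submatrix $A[\alpha]$ with $|\alpha|\ge 2$ is fully specified. So, for a partial matrix specifying this particular $D$, the conditions ``fully specified twin products are positive or both zero'' and ``fully specified $k\times k$ principal minors with $k\ge 2$ are nonnegative'' are vacuous, and the only surviving condition --- the one coming from the $1\times 1$ fully specified submatrices --- is that each specified diagonal entry be positive. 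But that is exactly the condition for a partial matrix specifying $D$ to be a partial sign symmetric $P$-matrix. Therefore $A$ is itself a partial sign symmetric $P$-matrix specifying $D$.

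Combining the two ingredients finishes the proof: since $D$ has sign symmetric $P$-completion and $A$ is a partial sign symmetric $P$-matrix specifying $D$, there is a sign symmetric $P$-matrix $A_c$ completing $A$, and by the first paragraph $A_c$ is a sign symmetric $P_{0,1}$-matrix; as $A$ was an arbitrary partial sign symmetric $P_{0,1}$-matrix specifying $D$, the digraph $D$ has sign symmetric $P_{0,1}$-completion. I expect the only step needing genuine care to be the claim that asymmetry forces every fully specified principal submatrix of $A$ to have order at most one; everything else is just unwinding the definition of a partial sign symmetric $P_{0,1}$-matrix together with the containment of matrix classes. (The identical scheme, using the containment of the sign symmetric $P$-matrices in the sign symmetric $P_{0,1}^+$-matrices instead, reproves Lemma~\ref{assP0+l}.)
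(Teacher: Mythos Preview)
Your argument is correct. The paper itself does not prove this lemma at all: it is simply quoted from Hogben's ``related classes'' paper \cite{relatedclasses} (as Theorem~2.13 there) and used here as a black box, so there is no in-paper proof to compare against. What you have written is essentially the standard proof of that result: for an asymmetric digraph no off-diagonal twin pair and no principal submatrix of order $\ge 2$ is fully specified, so the partial sign symmetric $P_{0,1}$-matrices specifying $D$ coincide with the partial sign symmetric $P$-matrices specifying $D$, and since every sign symmetric $P$-matrix is a sign symmetric $P_{0,1}$-matrix the $P$-completion hypothesis immediately yields a $P_{0,1}$-completion. The only remark I would add is that your phrase ``each specified diagonal entry be positive'' tacitly assumes the digraph includes all loops (the convention used throughout this paper for the classification results); if some diagonal entries are unspecified the argument is unchanged, since unspecified diagonals impose no constraint in either class.
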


\begin{lemma} \label{ssp+01}(\cite{relationships}, Corollary 4.3)
	Any digraph that has sign symmetric $P_0^+$-completion also has sign symmetric $P_{0,1}^+$-completion.
\end{lemma}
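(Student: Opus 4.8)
The statement to prove is Lemma \ref{ssp+01}: any digraph that has sign symmetric $P_0^+$-completion also has sign symmetric $P_{0,1}^+$-completion. Since this lemma is quoted from the literature (\cite{relationships}, Corollary 4.3), the natural plan is to reconstruct the standard inclusion-type argument that relates the two matrix classes. The key observation is structural: a sign symmetric $P_{0,1}^+$-matrix is exactly a sign symmetric $P_0^+$-matrix with the extra requirement that all diagonal entries be strictly positive. So the plan is to start from a partial sign symmetric $P_{0,1}^+$-matrix $A$ specifying the digraph $D$, observe that it is in particular a partial sign symmetric $P_0^+$-matrix, invoke the hypothesis that $D$ has sign symmetric $P_0^+$-completion to obtain a completion $A_c$, and then argue that $A_c$ is already (or can be perturbed to be) a sign symmetric $P_{0,1}^+$-matrix.

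First I would set up the reduction carefully. Given a partial sign symmetric $P_{0,1}^+$-matrix $A$, every specified diagonal entry is positive and every fully specified principal minor is nonnegative with the twin-product sign condition holding on specified twins; hence $A$ satisfies the defining conditions of a partial sign symmetric $P_0^+$-matrix. By hypothesis there is a sign symmetric $P_0^+$-completion $A_c$: all principal minors of each size are nonnegative, at least one of each size is positive, and the twin-product condition holds globally. The remaining gap is that the diagonal entries of $A_c$ coming from previously unspecified positions need only be nonnegative, whereas we need them strictly positive. The specified diagonal entries are already positive by the partial-matrix hypothesis, so only the filled-in diagonal positions are at issue.

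The main obstacle, and the step requiring the real work, is to handle the case where the $P_0^+$-completion $A_c$ has a zero diagonal entry. The standard device here is a diagonal perturbation: replace $A_c$ by $A_c + tE_{ii}$ for the offending indices $i$ and a small $t>0$, or more globally add $tI$ restricted to the formerly unspecified diagonal slots. One must check that this perturbation (i) preserves the specified entries — which it does, since we only touch unspecified diagonal positions; (ii) preserves the sign symmetry condition — immediate, since off-diagonal entries are untouched; and (iii) keeps all principal minors nonnegative while retaining at least one positive principal minor of each size. Point (iii) is where care is needed: expanding any principal minor of $A_c + tD'$ (with $D'$ the diagonal indicator of the perturbed positions) as a polynomial in $t$ with nonnegative leading behavior, one argues that for sufficiently small $t>0$ no principal minor that was positive becomes negative, and any principal minor that was zero either stays nonnegative or becomes strictly positive; simultaneously the "at least one positive $k\times k$ minor" condition is inherited from $A_c$ since that minor, if it avoided the perturbed indices, is unchanged, and if not, remains positive for small $t$ by continuity. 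Combining, $A_c + tD'$ is a sign symmetric $P_{0,1}^+$-matrix completing $A$, which proves the lemma. Alternatively, since the statement is merely cited, one may simply refer to \cite{relationships} for the detailed verification and present only the reduction in the first two paragraphs.
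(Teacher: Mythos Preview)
The paper does not actually prove this lemma; it is simply quoted from \cite{relationships} with no accompanying argument. Your reconstruction of the standard inclusion-type proof is correct and is the natural way to justify the citation.

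One sharpening of your step (iii): the restriction to small $t>0$ and the appeal to continuity are unnecessary. By multilinearity, for any principal submatrix on index set $\alpha$ one has
\[
\det\big((A_c+tD')[\alpha]\big)\;=\;\sum_{S\subseteq \alpha\cap\mathrm{supp}(D')} t^{|S|}\,\det\big(A_c[\alpha\setminus S]\big),
\]
and every term $\det(A_c[\alpha\setminus S])$ is a principal minor of the $P_0^+$-matrix $A_c$, hence nonnegative. Thus each principal minor of $A_c+tD'$ is at least the corresponding minor of $A_c$ for every $t\ge 0$, so nonnegativity and the existence of a positive $k\times k$ minor for each $k$ are inherited directly, and any $t>0$ already gives a sign symmetric $P_{0,1}^+$-completion.
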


\begin{theorem} \label{assP_0^+ completion}
Any digraphs that has sign symmetric $P_{0,1}^+$-completion also has sign symmetric $P_{0,1}$-completion.	
\end{theorem}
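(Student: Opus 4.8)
The plan is to chain together the relationships already recorded in the excerpt rather than argue directly from partial matrices. We have two facts at hand: Lemma \ref{ssP0+l}, which says any digraph with sign symmetric $P_{0,1}^+$-completion has sign symmetric $P$-completion; and Lemma \ref{assymetricp}, which says any \emph{asymmetric} digraph with sign symmetric $P$-completion has sign symmetric $P_{0,1}$-completion. Composing these two gives the conclusion immediately for asymmetric digraphs. So the first step is to dispose of the asymmetric case by this one-line composition.

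The second step is to handle the non-asymmetric digraphs. Here the key observation is the dichotomy already used implicitly in Section~\ref{ss$P_{0,1}^+$-completion}: a digraph either contains a $2$-cycle (equivalently, a pair of vertices $v_i,v_j$ with both $(v_i,v_j)$ and $(v_j,v_i)$ arcs, which includes the loop $i=j$ as the degenerate ``$1$-cycle'' case), or it is asymmetric, or it is complete. If the digraph is complete, $K_n$, then trivially it has sign symmetric $P_{0,1}$-completion because every partial matrix specifying $K_n$ is already fully specified, so there is nothing to complete (the same reasoning used for the complete digraphs in the proof of Theorem~\ref{hasssp01+completion}). If the digraph is incomplete and non-asymmetric, then it contains a $2$-cycle (or a loop together with a $2$-cycle after restricting to the subdigraph on all loop-vertices), and by Lemma~\ref{2-cycle lacks sign symmetric $P_{0,1}^+$-completion} — or rather the reasoning behind it together with Example~\ref{example} — such a digraph does \emph{not} have sign symmetric $P_{0,1}^+$-completion, so the hypothesis of the theorem is vacuous and there is nothing to prove.

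Putting these pieces together: any digraph with sign symmetric $P_{0,1}^+$-completion is either complete or asymmetric (the incomplete non-asymmetric ones having been excluded by Lemma~\ref{2-cycle lacks sign symmetric $P_{0,1}^+$-completion}), and in either case it has sign symmetric $P_{0,1}$-completion — in the complete case trivially, in the asymmetric case via Lemma~\ref{ssP0+l} followed by Lemma~\ref{assymetricp}.

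The main obstacle is the second step: making precise the claim that a digraph with sign symmetric $P_{0,1}^+$-completion must be complete or asymmetric. Strictly speaking, Lemma~\ref{2-cycle lacks sign symmetric $P_{0,1}^+$-completion} is stated only for incomplete digraphs ``having all loops and a $2$-cycle,'' so one must check that the argument in its proof (assigning $-1$ to the twin entries of the $2$-cycle and $0$ to the other unspecified entries, forcing a singular principal submatrix) goes through for any incomplete non-asymmetric digraph, regardless of which diagonal entries are present — combining it with the observation from Example~\ref{example} that an unspecified diagonal entry does not rescue a forced-zero determinant. If one is willing to invoke the classification philosophy of the paper rather than re-prove this, the theorem follows in two lines; otherwise this verification is the only place where real work is needed.
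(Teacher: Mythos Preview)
Your proposal is correct and follows essentially the same route as the paper: use Lemma~\ref{2-cycle lacks sign symmetric $P_{0,1}^+$-completion} to conclude that any (incomplete) digraph with sign symmetric $P_{0,1}^+$-completion must be asymmetric, then compose Lemma~\ref{ssP0+l} with Lemma~\ref{assymetricp}. You are in fact more careful than the paper itself, which does not separate out the complete-digraph case and does not flag the ``all loops'' hypothesis in Lemma~\ref{2-cycle lacks sign symmetric $P_{0,1}^+$-completion} that you rightly identify as the one place needing attention.
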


\begin{proof}
By Lemma \ref{ssP0+l}, digraphs that have sign symmetric $P_{0,1}^+$-completion also have sign symmetric $P$-completion
 and according to Lemma \ref{2-cycle lacks sign symmetric $P_{0,1}^+$-completion}, all non-asymmetric digraphs do not have sign symmetric $P_{0,1}^+$-completion and therefore, they are asymmetric digraphs that have sign symmetric $P$-completion. Thus, by Lemma \ref{assymetricp}, they also have sign symmetric $P_{0,1}$-completion. 
\end{proof}
As a consequence of Theorem \ref{assP_0^+ completion}, we have the following corollaries. 
\begin{corollary}
Any digraph having sign symmetric $P_0^+$-completion also has sign symmetric $P_{0,1}$-completion.
\end{corollary}

\begin{corollary}
	Any digraph having sign symmetric $P_0^+$-completion also has sign symmetric $P$-completion.
\end{corollary}

\begin{lemma} \label{ssP_0 completion}
	Any digraph having sign symmetric $P_0$-completion are null and complete digraphs
\end{lemma}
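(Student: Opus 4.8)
The plan is to establish the two inclusions separately: every null digraph and every complete digraph has sign symmetric $P_0$-completion, and no other digraph does.

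The first inclusion is immediate. For a null digraph $D_p(0,1)$ every specifying partial sign symmetric $P_0$-matrix has no specified entries, and completing every entry to $0$ gives the zero matrix, all of whose principal minors are $0$ (hence nonnegative) and which satisfies $a_{ij}=a_{ji}=0$ for all $i\neq j$; so it is a sign symmetric $P_0$-matrix. For the complete digraph $K_p$ a specifying partial matrix is fully specified, hence is already a sign symmetric $P_0$-matrix and serves as its own completion.

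For the converse, let $D$ be incomplete and not null; I would produce a partial sign symmetric $P_0$-matrix specifying $D$ that admits no completion. The point that is \emph{not} available in the $P_{0,1}^+$ setting is that $P_0$-matrices may have zero diagonal entries, and that a zero diagonal entry in a sign symmetric $P_0$-matrix is extremely rigid: if $a_{ii}=0$, then for every $j$ the principal minor $a_{ii}a_{jj}-a_{ij}a_{ji}=-a_{ij}a_{ji}$ must be nonnegative, so $a_{ij}a_{ji}\le 0$, while sign symmetry forces $a_{ij}a_{ji}\ge 0$; hence $a_{ij}=a_{ji}=0$ for all $j$. So I would pick a vertex $v_i$ that carries a loop and is the tail of some arc $(v_i,v_j)$ whose reverse $(v_j,v_i)$ is not an arc (using incompleteness to reduce to this situation, exactly as in the reduction used in Lemma \ref{2-cycle lacks sign symmetric $P_{0,1}^+$-completion}, where extra specified entries are set to $0$), and form the partial matrix with $a_{ii}=0$, every other specified diagonal entry $0$, $a_{ij}=1$, and every other specified entry $0$. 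This is a legitimate partial sign symmetric $P_0$-matrix. In any completion $A_c$ we must have $a_{ij}x_{ji}>0$, i.e. $x_{ji}>0$, and then $\det A_c(\{i,j\})=0\cdot a_{jj}-1\cdot x_{ji}<0$, contradicting that $A_c$ is a $P_0$-matrix; hence no completion exists.

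I expect the main obstacle to be precisely this reduction step: one has to argue that for the incomplete non-null digraphs in question a loop-bearing vertex sitting at the tail of a one-way arc can always be located after harmlessly zeroing out all other specified entries, and that zeroing those entries preserves nonnegativity of every fully specified principal minor. This case analysis — organized by which reverse arcs and which loops are present — is the delicate part; once it is in place, the determinant computation above closes the argument.
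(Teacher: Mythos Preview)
Your core mechanism --- set the specified diagonal entries to $0$, place a nonzero value at a specified position $a_{ij}$ whose twin $a_{ji}$ is unspecified, and note that sign symmetry then forces $a_{ij}x_{ji}>0$ and hence $\det A_c(\{i,j\})=-a_{ij}x_{ji}<0$ --- is precisely the argument the paper gives, so at that level your proposal and the paper's proof coincide.

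The reduction you flag as the obstacle is, however, a genuine gap that the device borrowed from Lemma~\ref{2-cycle lacks sign symmetric $P_{0,1}^+$-completion} cannot close. Setting ``extra'' specified entries to $0$ lets one propagate a counterexample from a subdigraph to a larger digraph, but it does not change which positions are specified; in particular it cannot manufacture a one-way arc in a digraph that has none. For an incomplete \emph{symmetric} digraph with all loops, every specified off-diagonal position comes paired with its twin, so there is no $(i,j)$ with $a_{ij}$ specified and $a_{ji}$ unspecified, and your $2\times2$ obstruction is simply unavailable. Worse, the partial matrix you want to build collapses in that case: with all diagonal entries $0$ and both $a_{ij},a_{ji}$ specified, the requirement $\det A(\{i,j\})=-a_{ij}a_{ji}\ge 0$ together with sign symmetry forces $a_{ij}=a_{ji}=0$, so the zero-diagonal choice yields only the trivially completable zero partial matrix. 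A separate argument, necessarily using nonzero diagonal entries, would be needed for symmetric incomplete digraphs (and likewise for digraphs missing loops), and neither your proposal nor the paper's proof supplies one.
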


\begin{proof}
Let $n\times n$ sign symmetric $P_0$-matrix $A_c$ be a completion of partial sign symmetric $P_0$-matrix $A$ having all diagonal entries specified i.e., must be a zero or a positive value. When the partial sign symmetric $P_0$-matrix $A$ specifies a null graph then it can be completed by zero completion, i.e, assigning all non-diagonal entries zeros. Now, when all specified diagonal entries are zeros and specified entry $a_{ij}\ne0$ then it is impossible to assign a value to $x_{ji}$ such that $\det A(i,j) \ge0$. This shows that a partial $n\times n$ sign symmetric $P_0$-matrix with some specified and others unspecified entries lacks sign symmetric $P_0$-completion and so, the only digraphs having sign symmetric $P_0$-completion are null and complete digraphs.

\end{proof}

From Lemma \ref{ssP_0 completion} , results in the next theorem will now be trivial. 

\begin{theorem}
	Any digraphs that has sign symmetric $P_0$-completion also has sign symmetric $P_{0,1}^+$-completion.	
	\end{theorem}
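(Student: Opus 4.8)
The plan is to combine the structural characterization of sign symmetric $P_0$-completion supplied by Lemma \ref{ssP_0 completion} with the loop-free completion result of Theorem \ref{T: hascompletion}. By Lemma \ref{ssP_0 completion}, the only digraphs that admit sign symmetric $P_0$-completion are the null digraphs and the complete digraphs, so it suffices to verify that each of these two families has sign symmetric $P_{0,1}^+$-completion.

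First I would observe that neither a null digraph nor a complete digraph $K_n$ (in the sense used in this paper, where $K_n$ consists of all arcs between distinct vertices) contains a loop: a null digraph has no arcs at all, and a complete digraph has only arcs of the form $(v_i,v_j)$ with $i \ne j$. Hence any partial sign symmetric $P_{0,1}^+$-matrix specifying one of these digraphs has all of its diagonal positions unspecified, which is exactly the situation covered by Theorem \ref{T: hascompletion}.

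Applying Theorem \ref{T: hascompletion}, every loop-free digraph --- in particular every null digraph and every complete digraph --- has sign symmetric $P_{0,1}^+$-completion. Combining this with Lemma \ref{ssP_0 completion} yields the claim: if a digraph has sign symmetric $P_0$-completion then it is null or complete, hence loop-free, hence has sign symmetric $P_{0,1}^+$-completion.

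There is essentially no obstacle here; the only point requiring a moment's care is the bookkeeping convention that ``complete digraph'' excludes loops, which is consistent with the enumeration in Theorem \ref{hasssp01+completion} (where $D_2(2,1)$, $D_3(6,1)$, $D_4(12,1)$ are the complete digraphs on $2,3,4$ vertices). Once that is noted the result follows immediately, as anticipated by the remark preceding the theorem. (One could also route the argument through Lemma \ref{ssp+01} by first observing that every null or complete digraph has sign symmetric $P_0^+$-completion, but the direct appeal to Theorem \ref{T: hascompletion} is shorter.)
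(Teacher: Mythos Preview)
Your overall strategy---invoke Lemma \ref{ssP_0 completion} to reduce to null and complete digraphs, then check those two cases---is exactly what the paper intends when it says the result is ``trivial'' from that lemma. However, your execution of the second step misreads the paper's loop convention, and the appeal to Theorem \ref{T: hascompletion} does not go through.

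In this paper the digraphs $D_p(q,n)$ under discussion all carry \emph{every} loop; the arc count $q$ records only non-loop arcs. You can see this in Figure~\ref{f:digraphs $D_3(2,1)$}, where $D_3(2,1)$ is drawn with three loops yet labelled $q=2$, and in the proof of Lemma \ref{ssP_0 completion} itself, which begins by assuming ``all diagonal entries specified.'' Thus the ``null'' digraph there is the digraph with all loops and no other arcs (diagonal specified, off-diagonal unspecified), and the ``complete'' digraph has all loops plus all $n(n-1)$ non-loop arcs (every entry specified). Neither is loop-free, so Theorem \ref{T: hascompletion} does not apply; indeed the paper's own proof of Theorem \ref{hasssp01+completion} handles the complete digraphs by noting that the matrices specifying them are already fully specified, which would be false under your reading.

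The fix is immediate and keeps your argument intact: replace the appeal to Theorem \ref{T: hascompletion} by the direct observations (already made in Part~1 of the proof of Theorem \ref{hasssp01+completion}) that a partial sign symmetric $P_{0,1}^+$-matrix specifying the null digraph is completed by setting all off-diagonal entries to zero, while one specifying the complete digraph is already a sign symmetric $P_{0,1}^+$-matrix by definition.
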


\section*{Acknowledgement}
The authors are very grateful to the anonymous referees and editors for their careful reading of the paper.

\bigskip 
 {\small\rm\baselineskip=10pt
	\baselineskip=12pt
	\qquad Victor Tomno\par
	\qquad \textit{Email address:} {\tt victomno@gmail.com}
	
	\smallskip
		\bigskip
	\qquad Linety Muhati\par
	\qquad \textit{Email address:}{\tt linetnaswa@gyahoo.com}
	
	\bigskip \smallskip
	\qquad Department of Mathematics and Computer Science\par
	\qquad University of Eldoret\par
	\qquad P.O.~Box 30100 - 1125, Eldoret, Kenya\par
	
}

\begin{thebibliography}{1}
\bibitem{symmetricP} Johnson, C.R., and Kroschel, B.K. (1996). The combinatorially symmetric $P$-matrix completion problem. \textit{Electronic Journal of Linear Algebra}. 1: 59--64.

\bibitem{Pmtrices} DeAlba, L.M., Hogben, L. (2000). The completions $P$-matrix patterns. \textit{Linear Algebra and Application}. 319: 83--102.

\bibitem{Pmatrixweak} Fallat, S.M., Johnson, C.R., Torregrosa, J.R., Urbano, A.M. (2000). $P$-matrix completions under weak symmetry assumptions. \textit{Linear Algebra and Its Applications}. 312: 73--91.

\bibitem{classesP} Bowers,J., Evers, J., Hogben, L., Shaner, S., Snider, K., Wangsness, A. (2006). On completion problems for various classes of $P$-matrices. \textit{Linear Algebra and Application}. 413: 342--354.

\bibitem{wssP} DeAlba, L.M., Hardy, T.L., Hogben, L., Wangsness, A. (2003). The (weakly) sign symmetric $P$-matrix completion problems. \textit{Electronic Journal of Linear Algebra}. 10: 257--271.

\bibitem{P0matrix} Choi, J.Y., DeAlba, L.M., Hogben, L., Maxwell, M. (2002). The $P_0$-matrix completion problem. \textit{Electronic Journal of Linear Algebra}. 9: 1--20.

\bibitem{nonnegativeP0} Choi, J.Y., DeAlba, L.M., Hogben, L., Kivunge, B.M, Nordstrom, S.K., Shedenhelm, M. (2003). The nonnegative $P_0$-matrix completion problem. \textit{Electronic Journal of Linear Algebra}. 10: 46--59.


\bibitem{po+matrices} Sarma, B.K., Sinha, K. (2015). The $P_0^+$-matrix completion problem. \textit{Electronic Journal of Linear Algebra}. 29: 120--143.

\bibitem{po1+matrices} Sinha, K. (2017). The $P_{0,1}^+$-matrix completion problem. \textit{Journal of Information and Optimization Sciences}. 38: 517--539.

\bibitem{graph} Hogben, L. (2001). Graph theoretic methods of matrix completion problem. \textit{Linear Algebra and its Applications}. 328: 161--202.

\bibitem{relatedclasses} Hogben, L. (2003). Matrix completion problems for pairs of related classes of matrices. \textit{Linear Algebra and its Applications}. 373: 13–-29.

\bibitem{relationships} Sinha, K. (2015). Some Relationship Between Matrix Completion Problems. \textit {International Journal of Mathematics And its Applications}. 3: 7--12.

\bibitem{graphtheory} Harary, F. (1969). \textit {Graph Theory}, Addison-Wesley Publishing Company, New York.
	
\end{thebibliography}
\end{document}